\newtheorem{theorem}{Theorem}[section]
\newtheorem{proposition}[theorem]{Proposition}
\newtheorem{remark}[theorem]{Remark}
\newcommand{\ISE}{\operatorname{ISE}}
\newcommand{\id}{\operatorname{I}}
\begin{document}

\title{Averaging of density kernel estimators}
\author{O. Chernova, F. Lavancier and P. Rochet}

\maketitle

\begin{abstract}
We study the theoretical properties of a linear combination of density kernel estimators obtained from different data-driven  bandwidths. The average estimator is proved to be asymptotically as efficient as the oracle, with a control on the error term. The performances are tested numerically, with results that compare favorably to other existing procedures.
\end{abstract}

\textbf{Keywords:} Aggregation ; Bandwidth selection ; Non-parametric estimation.

\section{Introduction}

Kernel estimation is an efficient and commonly used method to estimate a density from a sample of independent identically distributed random variables. It relies on the convolution of the empirical measure with a function $K$ (the kernel), adjusted via a tuning parameter $h$ (the bandwidth). 

Several commonly used data-driven methods for bandwidth selection such as \cite{silverman} or \cite{sheather_jones91} have stood the test of time although none can be recognized as objectively best. In the past decades, aggregation for kernel density estimators has been investigated as an alternative to bandwidth selection. Methods proposed in the literature include stacking (\cite{Smyth1999}), sequential processes (\cite{Catoni97themixture,MR1762904}) and minimization of quadratic (\cite{rigollettsybakov}) or Kullback-Leibler (\cite{butucea2017optimal}) criteria. In these papers, the initial estimators are  assumed non-random, which is generally achieved by dividing the sample to separate training and validation.

The aim of the present  article is to propose a new procedure to combine several competing density kernel estimators obtained from different, possibly data-driven, bandwidths. 
The method,  in the spirit of model averaging,  aims at minimizing the integrated square error of a linear combination of the kernel estimators. In this particular context, the first order asymptotic of the error is known up to a single parameter $\gamma$ equal to the integrated squared second derivative of the density. The easily tractable error is precisely what makes kernel estimation a good candidate for averaging procedures, as we discuss in Section~\ref{sec:somefacts}. Furthermore, the estimation of $\gamma$ can be made from the same data used to estimate the density so that no sample splitting is needed. 
The method is detailed in Section~\ref{sec:averaging}, where it is proved to be asymptotically as efficient as the best possible combination, referred to as the oracle. Our simulation study demonstrates that our method compares favorably to other existing procedures and confirms that sample splitting may lead to poorer results in this setting.

\section{Some facts on kernel estimators}\label{sec:somefacts}

Let $X_1,...,X_n$ be a sample of independent and identically distributed  real random variables with density $f$ with respect to the Lebesgue measure. Given a kernel  $K: \mathbb R \to \mathbb R$ and a bandwidth $h>0$, the kernel  estimator of $f$ is defined as $\hat f_h(x) := (nh)^{-1} \sum_{i=1}^n K \big( h^{-1} (X_i - x) \big), x \in \mathbb R$. Henceforth, we  assume that $K$ is a bounded, symmetric around zero, density function on $\mathbb R$ such that
\begin{equation} \tag{\textbf{H}$_K$}\label{cond_kernel} \textstyle \Vert K \Vert^2 := \int K^2(u) du < \infty \ \text{ and } \ c_K := \int u^2 K(u) du < \infty. \end{equation}
Concerning $f$, we assume  it is twice continuously differentiable on $\mathbb R$ and 
\begin{equation} \tag{\textbf{H}$_f$}\label{cond_f} \textstyle 
\text{$f$, $f'$ and $f''$ are bounded and square integrable.}
\end{equation}

Under \eqref{cond_kernel} and \eqref{cond_f}, \cite{hall1982limit} showed that the Integrated Square Error ($\ISE$) of a kernel estimator $\hat f_h$ satisfies
%is asymptotically equivalent to its expectation (the so-called Mean Integrated Square error or $\operatorname{MISE}$),
\begin{equation}\label{eq:ISE} \ISE(\hat f_h) := \Vert \hat f_h - f \Vert^2  = \frac{\Vert K \Vert^2}{nh} + \gamma \frac{h^4 c_K^2}{4} +o_p\Big(\frac{1}{nh}+h^4\Big),\end{equation}
where $\gamma := \int f''(x) ^2dx$. In the typical case where $(nh)^{-1}$ and $h^4$ balance out, 
meaning that $h = O(n^{-1/5})$  and $h \neq o(n^{-1/5})$, or for short $h \asymp n^{-1/5}$,
approximating $\ISE(\hat f_h)$ is achieved from estimating $\gamma$ only. This result is technically only valid for a deterministic bandwidth although in practice, most if not all common methods for bandwidth selection rely on some tuning parameters one has to calibrate from the data. Consequently, the bandwidth $h$ is generally a data-driven approximation of a deterministic one $h^*$, hopefully sharing its asymptotic properties. The effect of this approximation can nevertheless be considered negligible if the integrated square errors are asymptotically equivalent, in the sense that
\begin{equation}
\label{iseeq} \frac{\ISE(\hat f _{h}) - \ISE(\hat f_{h^*}) }{\ISE(\hat f_{h^*})} =o_p(1).\end{equation}

We prove in the next proposition that this equivalence does generally hold true, implying that Hall's result extends to data driven bandwidths. The setting encompasses most common data driven bandwidth selection procedures, as discussed after the proof. We moreover consider not only one but a collection of $k$ data-driven bandwidths $\mathbf h=(h_1,...,h_k)$, in order to study the integrated crossed errors between 
their associated kernel estimator, that are encoded in  the Gram matrix $\Sigma$ with general term $\Sigma_{ij}  = \int \big(\hat f_{h_i}(x) - f(x) \big) \big( \hat f_{h_j}(x) -f(x) \big) dx$.

\begin{proposition}\label{prop:data-driven}
Assume  \eqref{cond_kernel}, \eqref{cond_f}  and further that the kernel $K$ has compact support and is twice continuously differentiable. If there exists a deterministic bandwidth $h_i^* \asymp n ^{-1/5}$ for all data-driven bandwidth $h_i$ satisfying $h_i - h^*_i = o_p(n^{-2/5})$, then
\begin{equation} 
\label{eq:Sigma_inf} \Sigma = A + \gamma B +o_p (n^{-4/5} ),\end{equation}
where $A_{ij} = \frac 1 n \int K( u/h_i ) K (u/h_j) du $ and $B_{ij}:= \frac 1 4 h_i^2h_j^2 c_K^2$, $i,j=1,...,k$.
\end{proposition}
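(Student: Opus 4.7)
My plan is to first establish the expansion for arbitrary deterministic bandwidths $h_i^* \asymp n^{-1/5}$, and then transfer the result to the data-driven $\mathbf h$ by controlling the increment $\hat f_{h_i} - \hat f_{h_i^*}$ in squared $L^2$-norm, exploiting the additional regularity of $K$.

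For the deterministic case, decompose each $\hat f_{h_i^*} - f$ into its centred stochastic part plus its bias, and split $\Sigma^*_{ij}$ into four pieces: a pure bias term, a pure variance term, and two bias--variance cross terms. The pure bias term, handled by the standard Taylor expansion $E\hat f_h(x) - f(x) = (h^2 c_K/2) f''(x) + o(h^2)$ (using the symmetry of $K$ and $f \in C^2$), contributes $\gamma B^*_{ij} + o(n^{-4/5})$. The pure variance term reduces by Fubini to $A^*_{ij}$ up to an $O(n^{-1}) = o(n^{-4/5})$ remainder from the product of the expectations. The cross terms are centred linear functionals of $\hat f_{h_i^*}$ integrated against $E\hat f_{h_j^*} - f$, a function whose squared $L^2$-norm is $O(n^{-4/5})$; a short variance computation bounds them by $O_p(n^{-9/10}) = o_p(n^{-4/5})$. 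Concentration of $\Sigma^*_{ij}$ around its expectation follows from a bilinear variant of Hall's (1982) moment calculation, or equivalently from Hall's result applied through the polarisation identity $2\Sigma^*_{ij} = \ISE(\hat f_{h_i^*}) + \ISE(\hat f_{h_j^*}) - \|\hat f_{h_i^*} - \hat f_{h_j^*}\|^2$.

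The transfer to data-driven bandwidths is the main obstacle. Decomposing
\begin{equation*}
\Sigma_{ij} - \Sigma^*_{ij} = \int (\hat f_{h_i} - \hat f_{h_i^*})(\hat f_{h_j} - f)\,dx + \int (\hat f_{h_i^*} - f)(\hat f_{h_j} - \hat f_{h_j^*})\,dx,
\end{equation*}
and using $\|\hat f_{h_j^*} - f\| = O_p(n^{-2/5})$ from Hall, Cauchy--Schwarz reduces matters to proving $\|\hat f_{h_i} - \hat f_{h_i^*}\|^2 = o_p(n^{-4/5})$. This is where the twice continuous differentiability and compact support of $K$ enter. Differentiating $s \mapsto s^{-1} K(\cdot/s)$ yields $\partial_s \hat f_s(x) = -(ns^2)^{-1} \sum_\ell L((X_\ell - x)/s)$ with $L(u) := K(u) + u K'(u)$; integration by parts gives $\int L = 0$, so $L$ behaves like a higher-order kernel and a direct bias--variance computation produces $E\|\partial_s \hat f_s\|^2 = O(n^{-2/5})$ uniformly over $s$ in a shrinking neighbourhood of $h_i^*$. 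Combining with $(h_i - h_i^*)^2 = o_p(n^{-4/5})$ via
\begin{equation*}
\|\hat f_{h_i} - \hat f_{h_i^*}\|^2 \leq |h_i - h_i^*| \int_{h_i^* \wedge h_i}^{h_i^* \vee h_i} \|\partial_s \hat f_s\|^2 \, ds
\end{equation*}
gives $\|\hat f_{h_i} - \hat f_{h_i^*}\|^2 = o_p(n^{-6/5})$, well inside $o_p(n^{-4/5})$. A concluding step replaces $A^*$ and $B^*$ by $A$ and $B$: both maps are $C^1$ in $\mathbf h$ with partial derivatives of order $O(n^{-3/5})$ near $\mathbf h^*$, so $h_i - h_i^* = o_p(n^{-2/5})$ yields $A - A^* = o_p(n^{-1})$ and $B - B^* = o_p(n^{-1})$, both $o_p(n^{-4/5})$, closing the proof.
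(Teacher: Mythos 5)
Your proposal is correct, but it takes a genuinely different route from the paper at the crucial step, namely the passage from deterministic to data-driven bandwidths. The paper argues at the level of the integrated square error: following Hall (1987), it writes $\Delta(h_i)-\Delta(h^*_i)=(h_i-h^*_i)\Delta'(\tilde h_i)$, imports the bound $\Delta'(\tilde h_i)=O_p(n^{-3/5})$ from that reference, obtains the expansion with the deterministic matrices $A^*,B^*$ from ``the arguments of Theorem 2'' of Hall (1982), and finally swaps $(A^*,B^*)$ for $(A,B)$ by a uniform-continuity argument (your $C^1$ bound plays exactly this role). You argue instead at the level of the estimators themselves: the bound $\Vert \hat f_{h_i}-\hat f_{h^*_i}\Vert^2=o_p(n^{-6/5})$, derived from $\partial_s\hat f_s(x)=-(ns^2)^{-1}\sum_\ell L((X_\ell-x)/s)$ with $L=K+uK'$, $\int L=0$ and (by symmetry of $K$) $\int uL(u)du=0$, combined with Cauchy--Schwarz, controls all entries $\Sigma_{ij}-\Sigma^*_{ij}$ simultaneously; your rate computations check out (the $O_p(n^{-9/10})$ cross terms in the deterministic expansion, the bound $E\Vert\partial_s\hat f_s\Vert^2=O(n^{-2/5})$ uniformly on a deterministic shrinking interval around $h^*_i$, and the final Cauchy--Schwarz in $s$). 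What your route buys: it is self-contained, avoiding Hall (1987) entirely, and it treats the off-diagonal entries of $\Sigma$ explicitly, which the paper leaves implicit inside its citation of Hall's arguments; what the paper's route buys is brevity, since it cites rather than reproves the deterministic-bandwidth expansion that occupies the first half of your argument. One caution: your parenthetical claim that concentration of $\Sigma^*_{ij}$ follows ``equivalently'' from Hall's theorem via the polarisation identity is a slight overstatement, because the term $\Vert\hat f_{h^*_i}-\hat f_{h^*_j}\Vert^2$ is not covered by Hall's stated result (which concerns $\Vert\hat f_h-f\Vert^2$) and still requires his moment arguments applied to the kernel difference; your primary suggestion, the bilinear variant of the moment calculation, is the correct one, and it involves the same level of appeal to Hall (1982) as the paper itself makes.
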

 
\begin{proof}
 Following \cite{hall1987extent}, let $\Delta(h)=\ISE(\hat f _{h})$ and consider the first-order expansion $\Delta(h_i)-\Delta(h^*_i) = (h_i-h^*_i)\Delta'(\tilde h_i)$, for some $\tilde h_i$ between $h_i$ and $h^*_i$. From Section~2 and Lemma~3.2 in  \cite{hall1987extent}, we know that  $\Delta'(\tilde h_i)=O_p(n^{-3/5})$. Combined with the fact that $\Delta(h^*_i)  \asymp O_p(n^{-4/5})$, the condition $h_i - h^*_i = o_p(n^{-2/5})$ implies $\big( \ISE(\hat f _{h_i}) - \ISE(\hat f_{h_i^*}) \big) /\ISE(\hat f_{h_i^*}) =o_p(1)$ for all $i$. Using  the arguments of Theorem 2 in \cite{hall1982limit}, we get
$ \Sigma = A^* + \gamma B^* +o_p (n^{-4/5} )$, where $A^*$ and $B^*$ are defined similarly as $A$ and $B$ with $h_i^*$ in place of  $h_i$. The map $(x,y) \mapsto \int K(u/x) K(u/y) du$ is continuous  for $x,y > 0$, which implies its uniform continuity on every compact set in $(0,+ \infty)^2$. Applying this function to  the sequences $(x^*_n,y^*_n) = (n^{1/5} h^*_i, n^{1/5} h^*_j), \ i \neq j$, which are bounded away from zero, and $(x_n,y_n) = (n^{1/5} h_i, n^{1/5} h_j)$, we deduce that $ n^{4/5} (A - A^*) = o_p(1)$. Similarly, $n^{4/5} (B - B^*) = o_p(1)$ yielding the result.
\end{proof}

The most common bandwidth selection procedures do verify the condition $h_i - h^*_i = o_p(n^{-2/5})$ for some deterministic $h^*$ (see \cite{jones1996brief}), making the approximation \eqref{eq:Sigma_inf} available. For instance, Silverman's rule of thumb approximates the
	%is motivated by the optimal choice in case of $f$ being the density of a Gaussian distribution. It  uses the standard deviation $\sigma$ and inter-quartile range {\sc iqr}. Specifically 
	deterministic bandwidth $h^*= c \min\{\sigma, \textsc{iqr}/1.34\}\, n^{-1/5}$ where $\sigma$ is the standard deviation, {\sc iqr} the inter-quartile range and $c$ is either equal to $0.9$ or $1.06$, based on empirical considerations, see \cite{silverman}. The biased and unbiased least-square cross-validation bandwidths discussed in \cite{scott1987biased,hall1987extent} and the plug-in approach of \cite{sheather_jones91} approximate the deterministic bandwidth $h^*= \Vert K \Vert^{2/5} (n c_K \gamma )^{-1/5}$. The latter achieves a rate $h/h^* -1= O_p(n^{-5/14})$ that can be improved up to $O_p(n^{-1/2})$ if $\gamma$ is estimated following \cite{hall1991optimal}. Note finally that, as argued by several authors, a truncation argument allows to extend Proposition~\ref{prop:data-driven} to non compactly supported kernels $K$, see e.g. \cite{hall1987amount} or Remark 3.9 in \cite{park90}.

\section{The average estimator}\label{sec:averaging}

Let $\mathbf h = (h_1,...,h_k)^\top \in \mathbb R_+^k$ be a collection of (possibly data-driven) bandwidths and set $\mathbf {\hat f} = (\hat f_{h_1},...,\hat f_{h_k})^\top$. Following \cite{lavancier2016general}, we consider an estimator of $f$ expressed as a linear combination of the $\hat f_{h_i}$'s,
\begin{equation}\label{eq:averaging}  {\hat f}_\lambda = \lambda^\top \mathbf{\hat f} =  \sum_{i=1}^k \lambda_j \hat f_{h_i}, \end{equation}
where the weight vector $\lambda=(\lambda_1,...,\lambda_k)^\top$ is constrained to sum up to one, i.e.~$\lambda^\top \mathbf 1 = 1$ for $\mathbf 1 = (1,...,1)^\top$. Under this normalizing constraint, the integrated square error of $\hat{ f}_{\lambda}$ has the simple expression $ \ISE \big({\hat f}_\lambda \big) = \lambda^\top \Sigma \lambda$. If $\Sigma$ is invertible (which we shall assume throughout), the optimal weight vector $\lambda^*$ minimizing the $\ISE$ under the constraint $\lambda^\top \mathbf 1=1$, is given by $\lambda^* = \big( \mathbf 1^\top \Sigma^{-1} \mathbf 1 \big)^{-1} \Sigma^{-1} \mathbf 1 $. The resulting average estimator $\hat f^* = \lambda^{*\top} \hat{\mathbf f}$ is called the oracle.

With all bandwidths $h_i$ of order $n^{-1/5}$, we know from Proposition~\ref{prop:data-driven} that $\Sigma = A + \gamma B + o_p (n^{-4/5})$. Because both $A$ and $B$ are known, approximating $\Sigma$ is reduced to estimating $\gamma = \int f''(x)^2 dx$. This problem has been tackled in the literature, see for instance \cite{hall1987estimation,hall1991optimal,sheather_jones91}. Hence, given an estimator $\hat \gamma$ of $\gamma$, one obtains an approximation of $\Sigma$ by $ \widehat \Sigma= A + \hat \gamma B$. Replacing $\Sigma$ by its approximation $\widehat \Sigma$ yields the average density estimator 
$ \hat f_{AV} = \hat f_{\hat \lambda} = \big( \mathbf 1^\top \widehat \Sigma^{-1} \mathbf 1 \big)^{-1} \mathbf 1^\top\widehat \Sigma^{-1}\mathbf{\hat f}$.

\begin{theorem}\label{thise} Under the assumptions of Proposition~\ref{prop:data-driven}, if $\Sigma$ and $\widehat \Sigma$ are invertible and $\hat \gamma- \gamma = o_p(1)$, then 
$$  \ISE \big(\hat f_{AV} \big) =  \ISE (\hat f^*)  \big(1+ o_p(1) \big).  $$
\end{theorem}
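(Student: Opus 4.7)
The plan is to rewrite both $\ISE(\hat f_{AV})$ and $\ISE(\hat f^*)$ explicitly in terms of $\Sigma$ and $\widehat\Sigma$ via the closed-form oracle weights, and then exploit the smallness of $\widehat\Sigma-\Sigma$ to match them. Since $\lambda^\top\mathbf 1=1$ implies $\ISE(\hat f_\lambda) = \lambda^\top\Sigma\lambda$, substituting the Lagrange-multiplier solutions for $\lambda^*$ and $\hat\lambda$ yields
\begin{equation*}
\ISE(\hat f^*) = \frac{1}{\mathbf 1^\top\Sigma^{-1}\mathbf 1},\qquad \ISE(\hat f_{AV}) = \frac{\mathbf 1^\top\widehat\Sigma^{-1}\Sigma\widehat\Sigma^{-1}\mathbf 1}{(\mathbf 1^\top\widehat\Sigma^{-1}\mathbf 1)^2},
\end{equation*}
so the theorem reduces to showing that both $\mathbf 1^\top\widehat\Sigma^{-1}\mathbf 1$ and $\mathbf 1^\top\widehat\Sigma^{-1}\Sigma\widehat\Sigma^{-1}\mathbf 1$ agree with $\mathbf 1^\top\Sigma^{-1}\mathbf 1$ up to a multiplicative factor $1+o_p(1)$.

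The critical input is a size estimate for $\widehat\Sigma-\Sigma$. By Proposition~\ref{prop:data-driven} together with $\widehat\Sigma = A + \hat\gamma B$, one has $\widehat\Sigma - \Sigma = (\hat\gamma-\gamma)B + o_p(n^{-4/5})$, and because $B_{ij} \asymp h_i^2 h_j^2\asymp n^{-4/5}$ while $\hat\gamma-\gamma = o_p(1)$, this gives $\Vert \widehat\Sigma-\Sigma\Vert = o_p(n^{-4/5})$. In parallel, setting $\tilde\Sigma := n^{4/5}\Sigma$, the same expansion shows $\tilde\Sigma$ is a small perturbation of the deterministic matrix $n^{4/5}(A^*+\gamma B^*)$; provided this limit is non-singular, which we inherit from the assumed invertibility of $\Sigma$ and from the fact that the normalized bandwidths $n^{1/5}h_i^*$ live in a compact subset of $(0,\infty)$, the smallest eigenvalue of $\tilde\Sigma$ is bounded away from zero with probability tending to one. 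Hence $\Vert\Sigma^{-1}\Vert$ and $\Vert\widehat\Sigma^{-1}\Vert$ are both $O_p(n^{4/5})$ and $\mathbf 1^\top\Sigma^{-1}\mathbf 1 \asymp n^{4/5}$.

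The conclusion then follows from the perturbation identities $\widehat\Sigma^{-1}-\Sigma^{-1} = -\widehat\Sigma^{-1}(\widehat\Sigma-\Sigma)\Sigma^{-1}$ and $\widehat\Sigma^{-1}\Sigma\widehat\Sigma^{-1} = \widehat\Sigma^{-1} + \widehat\Sigma^{-1}(\Sigma-\widehat\Sigma)\widehat\Sigma^{-1}$. Sandwiching each by $\mathbf 1^\top$ and $\mathbf 1$ and combining the bounds above yields
\begin{equation*}
\mathbf 1^\top \widehat\Sigma^{-1}\mathbf 1 - \mathbf 1^\top\Sigma^{-1}\mathbf 1 = o_p(n^{4/5}),\qquad \mathbf 1^\top\widehat\Sigma^{-1}\Sigma\widehat\Sigma^{-1}\mathbf 1 - \mathbf 1^\top\widehat\Sigma^{-1}\mathbf 1 = o_p(n^{4/5}).
\end{equation*}
Dividing by $\mathbf 1^\top\Sigma^{-1}\mathbf 1 \asymp n^{4/5}$ delivers the two $1+o_p(1)$ equivalences, and plugging them into the closed forms above gives $\ISE(\hat f_{AV})/\ISE(\hat f^*) = 1 + o_p(1)$.

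The step that I expect to demand the most care is the uniform lower bound on the smallest eigenvalue of $\tilde\Sigma$: the theorem only posits $\Sigma$ invertible for each $n$, which does not by itself rule out near-degeneracy at the $n^{-4/5}$ scale. Establishing such a bound relies on the explicit additive structure $A+\gamma B$ and on the compactness of the set of admissible normalized bandwidths, exactly as already exploited in the proof of Proposition~\ref{prop:data-driven}.
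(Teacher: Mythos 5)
Your proof is correct, but it takes a genuinely different route from the paper's. The paper never invokes the closed forms of $\lambda^*$ and $\hat\lambda$: it uses only the fact that $\hat\lambda$ minimizes $\lambda \mapsto \lambda^\top \widehat\Sigma \lambda$ over the constraint set (so $\hat\lambda^\top \widehat\Sigma \hat\lambda \le \lambda^{*\top} \widehat\Sigma \lambda^*$), together with the inequality $|\lambda^\top(\Sigma - \widehat\Sigma)\lambda| \le ||| \id - \widehat\Sigma\Sigma^{-1} |||\; \lambda^\top \Sigma \lambda$, valid for all $\lambda$, to obtain the sandwich
$$ \big(1 - ||| \id - \widehat\Sigma\Sigma^{-1} |||\big)\, \ISE(\hat f_{AV}) \;\le\; \big(1 + ||| \id - \widehat\Sigma\Sigma^{-1} |||\big)\, \ISE(\hat f^*), $$
and then checks $||| \id - \widehat\Sigma\Sigma^{-1} ||| = o_p(1)$ from the decomposition $\widehat\Sigma\Sigma^{-1} = \id - C\Sigma^{-1} + (\hat\gamma - \gamma)B\Sigma^{-1}$. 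You instead substitute the explicit Lagrange solutions and compare $\mathbf 1^\top \widehat\Sigma^{-1}\mathbf 1$ and $\mathbf 1^\top \widehat\Sigma^{-1}\Sigma\widehat\Sigma^{-1}\mathbf 1$ to $\mathbf 1^\top \Sigma^{-1}\mathbf 1$ via perturbation identities. The quantitative inputs are the same in both arguments --- $|||\widehat\Sigma - \Sigma||| = o_p(n^{-4/5})$ and $|||\Sigma^{-1}||| = O_p(n^{4/5})$ --- and your bookkeeping with them is sound. What the paper's formulation buys is generality: since it only uses the minimizing property of $\hat\lambda$ and the membership of $\lambda^*$ in the feasible set, the identical proof covers any restricted weight set $\Lambda \subset \{\lambda : \lambda^\top \mathbf 1 = 1\}$, e.g.\ convex aggregation, which is exactly what the remark following the theorem asserts; your argument is confined to the full affine constraint, where closed-form solutions exist.

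Concerning the caveat you flag at the end: you are right that invertibility of $\Sigma$ at each fixed $n$ does not by itself yield a lower bound on $\lambda_{\min}(n^{4/5}\Sigma)$ in probability, and your proof needs that bound to get $|||\Sigma^{-1}||| = O_p(n^{4/5})$ and $|||\widehat\Sigma^{-1}||| = O_p(n^{4/5})$. But this is not a gap relative to the paper: its own assertions $B\Sigma^{-1} = O_p(1)$ and $C\Sigma^{-1} = o_p(1)$ encode exactly the same non-degeneracy requirement (non-singularity of the limiting normalized matrix, i.e.\ asymptotically distinct bandwidths) and are stated there without further justification; the substance of this requirement is acknowledged in Section~\ref{sec:simus}, where cross-validation bandwidths are excluded precisely because they would make $\Sigma$ asymptotically degenerate. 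Your proof therefore stands on the same footing as the paper's, and making the issue explicit is a merit rather than a flaw.
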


\begin{proof}
Write
$$  \ISE(\hat f_{AV}) = \hat \lambda^\top \Sigma \hat \lambda = \hat \lambda^\top \widehat \Sigma \hat \lambda + \hat \lambda^\top \big(\Sigma - \widehat \Sigma \big) \hat \lambda.$$
By construction, $\hat \lambda^\top \widehat \Sigma \hat \lambda \leq \lambda^{*\top} \widehat \Sigma \lambda^* =  \lambda^{*\top} \Sigma \lambda^* + \lambda^{*\top} \big( \widehat \Sigma - \Sigma \big)\lambda^*$. Moreover, denoting by  $||| . |||$ the operator norm, $|||A||| = \sup_{|| x || =1} || A x ||$, we have for all $\lambda \in \mathbb R^k$, $ |\lambda^\top \big(\Sigma - \widehat \Sigma \big) \lambda | \leq ||| \id -\widehat \Sigma \Sigma^{-1} ||| \  \lambda^\top \Sigma \lambda,$ see the proof of Lemma A.1 in \cite{lavancier2016general}. Applying the above inequality to $\hat \lambda$ and $\lambda^*$, we get
\begin{equation}\label{ineq_oracle} \big( 1 -  ||| \id - \widehat \Sigma \Sigma^{-1} ||| \big) \ \ISE(\hat f_{AV})  \leq \big( 1 +  ||| \id - \widehat \Sigma \Sigma^{-1} ||| \big)  \   \ISE (\hat f^*)  \end{equation}
where we recall $\ISE (\hat f^*)=\lambda^{*\top} \Sigma \lambda^*$. It remains to show $||| \id - \widehat \Sigma \Sigma^{-1} ||| = o_p(1)$. By Proposition~2.1, 
$\Sigma  =A + \gamma B + C$ with $A=O_p(n^{-4/5})$, $B=O_p(n^{-4/5})$ and $C=o_p(n^{-4/5})$. Therefore  
$$ \widehat \Sigma \Sigma^{-1} =  (A + \hat \gamma B ) \Sigma^{-1} = I - C\Sigma^{-1} + (\hat\gamma -\gamma) B \Sigma^{-1}$$
and since $B \Sigma^{-1}=O_p(1)$,
\begin{equation}\label{ineq_key} ||| \id - \widehat \Sigma \Sigma^{-1} ||| \leq  |||C\Sigma^{-1}||| +  |\hat \gamma - \gamma |O_p(1). \end{equation}
The result follows from the fact that $C\Sigma^{-1}=o_p(1)$ and $\hat \gamma - \gamma = o_p(1)$.
\end{proof}

\begin{remark} In our setting, the number $k$ of initial estimators is assumed fixed  although the result remains valid if $k=k_n$ increases slowly with $n$. As seen in the proof, the ISE of $\hat f_{AV}$ approaches that of the oracle $\hat f^*$ provided that $||| \id - \widehat \Sigma \Sigma^{-1} ||| = o_p(1)$. This can still be achieved if $k_n$ increases sufficiently slowly with $n$, e.g.~logarithmically. In practice however, the numerical study shows that the results are less satisfactory with a too large number of initial estimators, due to $\Sigma$ being close to singular. For better performances, we suggest to use no more than four initial estimators, obtained from different methods, in order to reduce linear dependencies (see the discussion in Section \ref{sec:simus}).
\end{remark}

One may be interested in setting additional constraints on the weights $\lambda_i$, restricting $\lambda$ to a proper subset $\Lambda \subset \{\lambda: \lambda^\top \mathbf 1=1\}$. A typical example is to impose the $\lambda_i$'s to be non-negative, a framework usually referred to as convex averaging. In fact, the same result as in Theorem~\ref{thise} holds for any such subset $\Lambda$, using the corresponding oracle and average estimator,  the proof being identical. A reason for considering additional constraints on $\lambda$ is to aim for a more stable solution, which may be desirable in practice especially when working with small samples (see e.g. Table~\ref{table} in Section~\ref{sec:simus}). However, since the oracle is necessarily worse (in term of integrated square error) for a proper subset $\Lambda$, the result lacks a theoretical justification for using a smaller set. Note that, on the contrary, the constraint $\lambda^\top \mathbf 1=1$ is necessary for the equality ISE$(\hat f_\lambda) = \lambda^\top \Sigma \lambda$ to hold true. \\

The next proposition establishes a rate of convergence in the case where the bandwidths $h_i$ used to build the experts $\hat f_{h_i}$ are deterministic and of the order $h_i \asymp n ^{-1/5}$. The additional assumption $\hat \gamma - \gamma =o_p(n^{-2/5})$ is mild as the best known convergence for an estimator $\hat\gamma$ is $\hat \gamma - \gamma =O_p(n^{-1/2})$, see for instance \cite{hall1991optimal}.

\begin{proposition}\label{prop:rate}
Assume \eqref{cond_kernel} and \eqref{cond_f}. If the bandwidths $h_i$ are deterministic with $h_i \asymp n ^{-1/5}$, $\Sigma$ and $\widehat \Sigma$ are invertible and $\hat \gamma - \gamma =o_p(n^{-2/5})$, 
$$ \ISE(\hat f_{AV}) = \ISE (\hat f^* ) + O_p(n^{-6/5}).$$
\end{proposition}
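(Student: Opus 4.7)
The plan is to revisit the proof of Theorem \ref{thise}, tracking rates carefully under the stronger hypothesis that the bandwidths are deterministic. From inequality \eqref{ineq_oracle}, $(1-r)\ISE(\hat f_{AV}) \leq (1+r)\ISE(\hat f^*)$ with $r := ||| \id - \widehat\Sigma\Sigma^{-1} |||$, and since $r = o_p(1)$ by Theorem \ref{thise} itself, one has $\ISE(\hat f_{AV}) - \ISE(\hat f^*) \leq 2r\cdot\ISE(\hat f^*)(1+o_p(1))$. Thus it suffices to show separately that $\ISE(\hat f^*) = O_p(n^{-4/5})$ and that $r = O_p(n^{-2/5})$. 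The first bound is immediate: any standard basis vector $e_i$ is admissible, so $\ISE(\hat f^*) \leq \Sigma_{ii} = \ISE(\hat f_{h_i}) = O_p(n^{-4/5})$ by Hall (1982) for deterministic $h_i \asymp n^{-1/5}$.

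For the rate on $r$, I would reuse \eqref{ineq_key} from the proof of Theorem \ref{thise}: $r \leq ||| C\Sigma^{-1} ||| + |\hat\gamma - \gamma|\cdot O_p(1)$. The term involving $\hat\gamma$ is $o_p(n^{-2/5})$ by the hypothesis $\hat\gamma - \gamma = o_p(n^{-2/5})$. Since $||| \Sigma^{-1} ||| = O_p(n^{4/5})$, the first term reduces to the sharper bound $||| C ||| = O_p(n^{-6/5})$, which refines the $o_p(n^{-4/5})$ guaranteed by Proposition \ref{prop:data-driven}.

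Establishing this refined bound is the key step. I would decompose $C_{ij} = \Sigma_{ij} - A_{ij} - \gamma B_{ij} = (E\Sigma_{ij} - A_{ij} - \gamma B_{ij}) + (\Sigma_{ij} - E\Sigma_{ij})$. The deterministic part is $O(n^{-6/5})$ by a higher-order Taylor expansion of the integrated bias and variance (next-order bias in $O(h^6) = O(n^{-6/5})$, with next-order variance corrections absorbable into $A$). The random part is handled through the V-statistic representation $\Sigma_{ij} = \int \hat f_{h_i}\hat f_{h_j} - \int \hat f_{h_i}f - \int \hat f_{h_j}f + \|f\|^2$: its Hoeffding expansion exhibits a key cancellation in which the dominant $O_p(n^{-1/2})$ linear projection $\tfrac{2}{n}\sum_k(f(X_k) - \|f\|^2)$ appears with the same coefficient in both $\int \hat f_{h_i}\hat f_{h_j}$ and $\int \hat f_{h_i}f + \int \hat f_{h_j}f$, so that it drops out of $\Sigma_{ij} - E\Sigma_{ij}$.

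The main obstacle is to verify that, after this cancellation, the residual fluctuation is of order $O_p(n^{-6/5})$. The leading non-cancelling contribution is proportional to $(h_i^2 + h_j^2)$ times $\tfrac{1}{n}\sum_k(f''(X_k) - Ef''(X))$, giving a rank-two perturbation with entries of order $O_p(n^{-9/10})$. To improve the raw bound $||| C ||| = O_p(n^{-9/10})$ into the required $O_p(n^{-6/5})$, one would exploit the specific form $h^{(2)}\mathbf 1^\top + \mathbf 1(h^{(2)})^\top$ of this fluctuation, where $h^{(2)} := (h_1^2,\dots,h_k^2)^\top$, together with the identity $\ISE(\hat f_{AV}) - \ISE(\hat f^*) = (\hat\lambda - \lambda^*)^\top \Sigma(\hat\lambda - \lambda^*)$, in which the projector $P := \Sigma^{-1} - (\Sigma^{-1}\mathbf 1)(\Sigma^{-1}\mathbf 1)^\top/(\mathbf 1^\top\Sigma^{-1}\mathbf 1)$ annihilates $\mathbf 1$ and so removes one of the two rank-one components, refining the final rate.
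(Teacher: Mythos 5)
Your reduction is exactly the paper's: via \eqref{ineq_oracle} and \eqref{ineq_key}, together with $\ISE(\hat f^*)\le \Sigma_{ii}=O_p(n^{-4/5})$ and $|||\Sigma^{-1}|||=O_p(n^{4/5})$, everything hinges on the bound $|||C|||=O_p(n^{-6/5})$ for $C=\Sigma-A-\gamma B$. The paper disposes of this key bound in one sentence (for deterministic $h_i\asymp n^{-1/5}$ the ``second order asymptotic expansion'' is asserted to give $C=O_p(n^{-6/5})$) and then concludes exactly as you do. You diverge only in attempting to actually prove that bound, and your own Hoeffding computation --- which is correct --- shows it fails for the raw matrix $C$: after the $O_p(n^{-1/2})$ cancellation, the surviving linear term $\tfrac{c_K}{2}(h_i^2+h_j^2)Z_n$, with $Z_n=\tfrac1n\sum_k\{f''(X_k)-\mathbb{E}f''(X)\}$, is of order $n^{-9/10}\gg n^{-6/5}$. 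Moreover, two contributions of order $\gg n^{-6/5}$ are missing from your accounting: the degenerate quadratic part of the U-statistic $\int\hat f_{h_i}\hat f_{h_j}$, of order $n^{-1}h^{-1/2}\asymp n^{-9/10}$, which carries no low-rank structure in $(i,j)$; and the exact variance correction $-\Vert f\Vert^2/n\,\mathbf 1\mathbf 1^\top+O(h^2/n)$, of order $n^{-1}$, which is not ``absorbable into $A$'' since $A$ is pinned down by its definition and enters $\widehat\Sigma$ as such (though it is proportional to $\mathbf 1\mathbf 1^\top$). The $O(h^6)$ bias remainder you invoke also requires derivatives of $f$ beyond what \eqref{cond_f} provides.

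The genuine gap is therefore your final step, which you yourself flag as the main obstacle and leave as a sketch; and the sketch, as designed, cannot deliver the stated rate. The projector $P$ does kill $\mathbf 1\mathbf 1^\top$ and the $\mathbf 1(h^{(2)})^\top$ half of the rank-two fluctuation, but the other half survives: to first order, $\hat\lambda-\lambda^*=-P(\widehat\Sigma-\Sigma)\lambda^*+\dots$ contains the term $\tfrac{c_K}{2}Z_n P h^{(2)}$, which is nonzero whenever the $h_i$ are not all equal (i.e.\ $h^{(2)}\not\propto\mathbf 1$). Using $P\Sigma P=P$, its contribution to $(\hat\lambda-\lambda^*)^\top\Sigma(\hat\lambda-\lambda^*)$ is of order $Z_n^2\, h^{(2)\top}P h^{(2)}\asymp n^{-1}\cdot O(1)$, and the unstructured degenerate U-statistic contributes the same order through $PU\lambda^*$, which $P$ cannot remove at all. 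So even if every step of your outline were carried out, you would land at $O_p(n^{-1})$ --- precisely the residual rate of \cite{rigollettsybakov} that the proposition claims to improve upon --- and not at $O_p(n^{-6/5})$. To be clear, this is not a bookkeeping slip on your side: the step you cannot complete is exactly the one the paper asserts without proof, and your expansion (consistent with Hall-type central limit theorems, which place the fluctuation of $\Sigma_{ij}$ around $A_{ij}+\gamma B_{ij}$ at the $n^{-9/10}$ scale) is in genuine tension with that assertion. But as a proof of the proposition as stated, the proposal does not close.
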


\begin{proof}
Under the assumptions, Theorem~2.1 applies with second order asymptotic expansion $ C = \Sigma - A - \gamma B = O_p(n^{-6/5})$. In view of  \eqref{ineq_oracle} and \eqref{ineq_key}, the rate of convergence for $\ISE(\hat f_{AV}) - \ISE (\hat f^* )$ follows from investigating $|||C\Sigma^{-1}|||$ and $|\hat \gamma - \gamma |$.
%, given that $\ISE (\hat f_{AV})$ and $\ISE (\hat f^*)$ have the same rate of convergence of $O_p(n^{-4/5})$. 
Here, $|||C\Sigma^{-1}|||=O_p(n^{-2/5})$ while $|\hat \gamma - \gamma |$ is negligible in comparison by assumption.
\end{proof}

The result of Proposition~\ref{prop:rate}  improves on the residual term $O(n^{-1})$ obtained in \cite{rigollettsybakov} where the initial estimators, or experts, are built from a training sample of size $n_{tr}$, while the aggregation is performed on an independent validation sample of size $n_{va}$ with $n=n_{tr}+ n_{va}$. In fact, \cite{rigollettsybakov} show that conditionally to the training sample (making the experts built once and for all), their aggregation procedure reaches the minimax rate $O(n_{va}^{-1})$, which is at best of the order $O(n^{-1})$.  In our setting, the rate of the residual term is improved due to the initial kernel estimators contributing a factor $O_p(n^{-4/5})$.

\section{Simulations}\label{sec:simus}

Based on a sample of $n$ independent and identically distributed observations, we consider the estimation of the following density functions, depicted in Figure~\ref{fig:dens}:
 the standard normal distribution $\mathcal N(0,1)$;
the Gamma distribution with shape parameter 2 and scale parameter 1;
the Cauchy distribution; the equiprobable mixture of   $\mathcal N(-1.5,1)$ and  $\mathcal N(1.5,1)$; and 
 the mixture of  $\mathcal N(-1.5,1)$ with probability $0.7$ and  $\mathcal N(1.5,1)$ with probability $0.3$. 

\begin{figure}

\label{fig:dens}
	\centering
		\includegraphics[width=\textwidth]{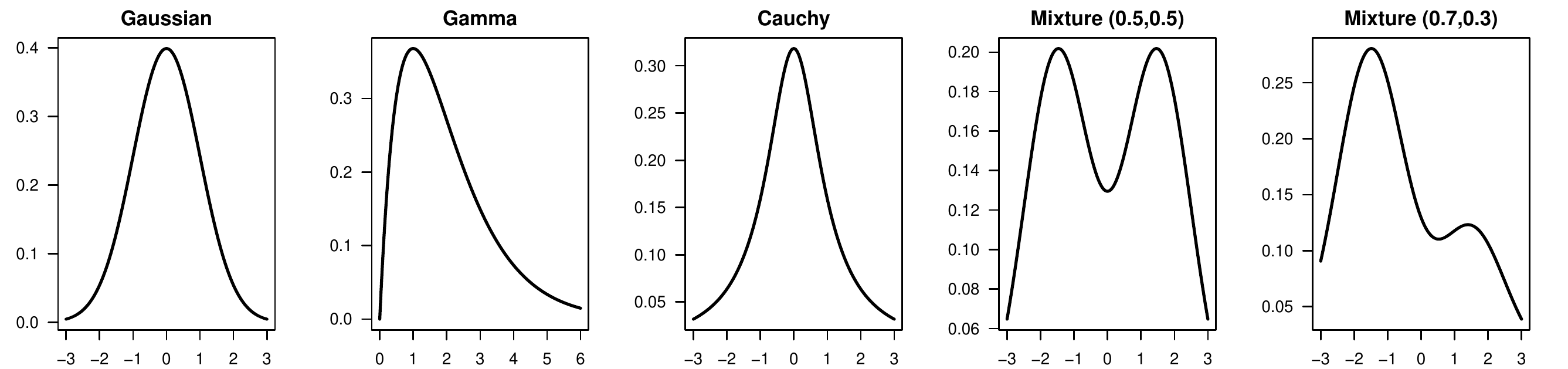}
	\caption{Densities functions considered in the numerical examples}
	
\end{figure}

The initial kernel estimators are built with Gaussian kernel and data-driven  bandwidths \texttt{nrd0} (Silverman's rule of thumb), \texttt{nrd} (its variation with normalizing constant 1.06), and \texttt{SJ} (the plug-in approach of Sheater and Jones), following the default choices in the \texttt{R} software \cite{R}.
The least-square cross-validation bandwidths (\texttt{ucv} and \texttt{bcv} in \texttt{R}) are deliberately not included because they approximate the same deterministic bandwidth $h^*$ as Sheater and Jones' method, which would result in an asymptotically degenerated (non-invertible) matrix $\Sigma$. This was confirmed by simulations (not displayed here), where the inclusion of these estimators did not improve the performances described below. The three kernel estimators are then combined by our method where $\hat \gamma$ is estimated as in \cite{hall1991optimal}. We also assess convex averaging where, in addition, the weights $\lambda_i$ are restricted to non-negative values. For the sake of comparison with existing techniques, we implement the linear and convex aggregation methods considered in \cite{rigollettsybakov} who also use a quadratic loss function. 
In their setting, the experts $\hat f_{h_i}$ are computed from a training sample of half size, independent from the remaining validation sample on which the weights $\lambda_i$ are estimated. In the same spirit, we have also tested this splitting scheme for our average estimator, where $\hat\gamma$ is estimated from the validation sample. For robustness, it is advised in  \cite{rigollettsybakov} to average different aggregation estimators obtained over multiple sample splittings. We followed this recommendation and we considered 10 independent splittings into two samples of equal size.

\begin{table}[H]
\centering
\resizebox{0.9\textwidth}{!} {
\begin{tabular}{|lllllllllll|}
  \hline 
$n$ & Law & nrd & nrd0 &  SJ & . & AV & AVsplit & RT & AVconv & RTconv \\ 
  \hline
50 & Norm & 1936 & 1737 & 1902 & . & 1788 & 1698 & 2480 & 1844 & 2030 \\ 
   & Gamma & 1822 & 1903 & 1864 & . & 1897 & 2088 & 2685 & 1841 & 2173 \\ 
   & Cauchy & 1214 & 1289 & 1233 & . & 1292 & 1493 & 1778 & 1218 & 1452 \\ 
   & Mix05 & 999 & 1043 & 1056 & . & 1239 & 1397 & 1393 & 1063 & 1164 \\ 
   & Mix03 & 1086 & 1145 & 1155 & . & 1238 & 1401 & 1582 & 1159 & 1267 \\ \hline
  100 & Norm & 1057 & 957 & 1026 & . & 962 & 947 & 1325 & 998 & 1113 \\ 
   & Gamma & 1129 & 1239 & 1146 & . & 1153 & 1305 & 1615 & 1135 & 1376 \\ 
   & Cauchy & 748 & 848 & 756 & . & 775 & 906 & 1059 & 750 & 938 \\ 
   & Mix05 & 634 & 699 & 677 & . & 796 & 922 & 888 & 705 & 772 \\ 
   & Mix03 & 669 & 751 & 703 & . & 728 & 852 & 898 & 717 & 823 \\ \hline
  200 & Norm & 628 & 578 & 616 & . & 568 & 556 & 767 & 597 & 660 \\ 
   & Gamma & 701 & 795 & 705 & . & 701 & 772 & 944 & 696 & 839 \\ 
   & Cauchy & 462 & 540 & 454 & . & 452 & 531 & 613 & 454 & 585 \\ 
   & Mix05 & 391 & 453 & 409 & . & 474 & 559 & 501 & 452 & 492 \\ 
   & Mix03 & 398 & 470 & 406 & . & 395 & 467 & 486 & 412 & 510 \\ \hline
  500 & Norm & 310 & 286 & 299 & . & 276 & 274 & 346 & 292 & 321 \\ 
   & Gamma & 388 & 462 & 370 & . & 367 & 411 & 459 & 369 & 459 \\ 
   & Cauchy & 232 & 283 & 220 & . & 208 & 240 & 294 & 221 & 293 \\ 
   & Mix05 & 210 & 253 & 209 & . & 223 & 258 & 227 & 240 & 264 \\ 
   & Mix03 & 223 & 271 & 218 & . & 199 & 222 & 234 & 222 & 282 \\ \hline
  1000 & Norm & 183 & 171 & 177 & . & 163 & 163 & 196 & 174 & 191 \\ 
   & Gamma & 231 & 285 & 216 & . & 211 & 240 & 262 & 215 & 272 \\ 
   & Cauchy & 145 & 182 & 133 & . & 121 & 138 & 178 & 134 & 181 \\ 
   & Mix05 & 126 & 158 & 120 & . & 120 & 138 & 120 & 134 & 159 \\ 
   & Mix03 & 132 & 165 & 125 & . & 108 & 117 & 124 & 127 & 164 \\ \hline
  2000 & Norm & 111 & 104 & 106 & . & 99 & 98 & 113 & 105 & 114 \\ 
   & Gamma & 146 & 183 & 132 & . & 130 & 147 & 160 & 132 & 167 \\ 
   & Cauchy & 84 & 108 & 76 & . & 66 & 73 & 110 & 76 & 101 \\ 
   & Mix05 & 79 & 100 & 73 & . & 68 & 74 & 68 & 79 & 95 \\ 
   & Mix03 & 77 & 98 & 72 & . & 59 & 61 & 66 & 72 & 92 \\ 
   \hline
\end{tabular}}
\caption{{\scriptsize Estimated MISE (based on $10^3$ replications) of the kernel estimators with bandwidths \texttt{nrd}, \texttt{nrd0} or \texttt{SJ} (by default in \texttt{R}) and the combinations of these estimators by our method (AV), our method with sample splitting (AVsplit), the linear method in \cite{rigollettsybakov} (RT), our convex method (AVconv) and the convex method in  \cite{rigollettsybakov} (RTconv).}}
\label{table}
\end{table}

The mean integrated square errors of the aforementioned  estimators are summarized in Table~\ref{table}, depending on the sample size $n$. These errors are approximated by the average over $10^3$ replications of the integrated square errors. It shows that our averaging procedure (AV in the table) outperforms every single initial kernel estimators when the sample size is large ($n\geq 500$) and the gain becomes significant when $n\geq 1000$. On the contrary, our averaging procedure is inefficient for small sample sizes ($n=50$), which is probably explained by a poor use of the asymptotic expansion of $\Sigma$ in this case. In fact, the convex averaging procedure (AVconv in the table) seems preferable for small $n$ although it also fails to achieve the same efficiency as the best estimator in the initial collection.  
A transition seems to occur for moderate sample sizes around $n=100$, where the results of the average estimator are comparable to the best kernel estimator. In all cases, our averaging procedure outperforms the alternative aggregation method of \cite{rigollettsybakov}. Finally, according to the numerical results, a splitting scheme for our method  (AVsplit in the table) is not to be recommended, suggesting that all the available data should be used both for the initial estimators and for $\hat \gamma$, 
which is in line with our theoretical findings.

\bibliographystyle{alpha}      % Chicago style, author-year citations
\bibliography{aggreg}

\end{document}